\theoremstyle{definition}
\newtheorem{theorem}{Theorem}[section]
\newtheorem{prop}[theorem]{Proposition}
\newtheorem{lemma}[theorem]{Lemma}
\newtheorem{cor}[theorem]{Corollary}
\theoremstyle{remark}
\newtheorem{dfn}[theorem]{Definition}
\newtheorem{remark}[theorem]{Remark}
\numberwithin{equation}{section}
\def\co{\colon\thinspace}
\def\ep{\epsilon}
\def\R{\mathbb{R}}
\def\Z{\mathbb{Z}}
\def\C{\mathbb{C}}
\title{Local rigidity, symplectic homeomorphisms, and coisotropic submanifolds}
\author{Michael Usher}
\address{Department of Mathematics\\University of Georgia\\Athens, GA 30602}
\email{usher@uga.edu}
\begin{document}
\begin{abstract}
We introduce the notion of a point on a locally closed subset of a symplectic manifold being ``locally rigid'' with respect to that subset, prove that this notion is invariant under symplectic homeomorphisms, and show that coisotropic submanifolds are distinguished among all smooth submanifolds by the property that all of their points are locally rigid.  This yields a simplified proof of the Humili\`ere-Leclercq-Seyfaddini theorem on the $C^0$-rigidity of coisotropic submanifolds.  Connections are also made to the ``rigid locus'' that has previously been used in the study of Chekanov-Hofer pseudometrics on orbits of closed subsets under the Hamiltonian diffeomorphism group.
\end{abstract}
\maketitle

\section{Introduction, definitions and invariance} \label{invtsec}

Let $(M,\omega)$ be a symplectic manifold, and $\mathrm{Symp}(M,\omega)$ the group of symplectic diffeomorphisms of $(M,\omega)$.  The well-known Eliashberg-Gromov theorem asserts that, within the diffeomorphism group of $M$, $\mathrm{Symp}(M,\omega)$ is closed with respect to the $C^0$ (compact-open) topology; this motivates defining a symplectic \emph{homeomorphism} of $M$ as a homeomorphism of $M$ which arises as a $C^0$-limit of symplectic diffeomorphisms. 

It is of interest to understand what properties of $M$ and of its subsets are preserved by symplectic homeomorphisms, and to what extent traditional symplectic notions (whether a submanifold is Lagrangian or more generally coisotropic, for instance) can be characterized in terms of symplectic-homeomorphism-\\invariant criteria.  In this note we define what it means for a point on an appropriate subset $N$ of $M$ to be ``rigid'' or ``locally rigid'' with respect to $N$, and we show (in Proposition \ref{invt}) that these notions are invariant under symplectic homeomorphisms and (in Theorem \ref{coisochar}) that, if $N$ happens to be a smooth submanifold, then $N$ is coisotropic if and only if all of its points are locally rigid.  The Humili\`ere-Leclercq-Seyfaddini theorem \cite{HLS} on the $C^0$-rigidity of coisotropic submanifolds follows as an immediate corollary.  We also show in Proposition \ref{rnchar} that our definition of rigidity is closely related to the ``rigid locus'' considered in \cite{U1}, implying symplectic homeomorphism invariance of certain properties related to the interaction of closed subsets with the Hofer norm and   leading in Corollary \ref{monotone}  to a resolution of a question raised in \cite{U2}.   In the sequel paper \cite{U4}, similar ideas are developed in the context of contact geometry in order to prove a partial contact version of the Humili\`ere-Leclercq-Seyfaddini theorem.

Throughout the paper, ``manifold'' means ``manifold without boundary.''  To set up notation for the definitions of rigidity and local rigidity, for an arbitrary symplectic manifold $(M,\omega)$ let $\mathrm{Ham}(M,\omega)$ denote the group of diffeomorphisms that arise as time-one maps $\phi_{H}^{1}$ of the time-dependent Hamiltonian vector fields of compactly supported smooth functions $H\co [0,1]\times M\to \R$.  For any such $H$, write \[ \|H\|=\int_{0}^{1}\left(\max_M H(t,\cdot)-\min_M H(t,\cdot)\right)dt \] and for $\phi\in \mathrm{Ham}(M,\omega)$ write \[ \|\phi\|=\inf\left\{\|H\||\phi_{H}^{1}=\phi\right\}.\]  (Thus $\|\phi\|$ is the Hofer norm of $\phi$; see \emph{e.g.} \cite{Pbook} for an overview.) 
Finally if $U$ and $N$ are subsets of $M$ let \[ e_{M}(U,N)=\inf\left\{\|\phi\|\left|\phi\in \mathrm{Ham}(M,\omega),\,\phi(\bar{U})\cap N=\varnothing\right.\right\}.\]

\begin{dfn}\label{rigdfn} Let $(M,\omega)$ be a symplectic manifold, let $N\subset M$ be a closed subset, and let $p\in N$.  We say that $p$ is \textbf{rigid with respect to $N$} if for every  neighborhood of $U$ of $p$ we have $e_M(U,N)>0$.

If instead $N$ is a locally closed subset\footnote{Recall that this means that each point $q\in N$ has a neighborhood $V$ such that $N\cap V$ is closed as a subset of $V$.}  of $M$ and $p\in N$, we say that $p$ is \textbf{locally rigid with respect to $N$} if there is a neighborhood $W$ of $p$ such that $N\cap W$ is closed as a subset of $W$ and $p$ is rigid with respect to $N\cap W$ (considered as a subset of the symplectic manifold $(W,\omega|_W)$).
\end{dfn}

\begin{remark} All smooth submanifolds of $M$ are locally closed as an immediate consequence of the definition of a submanifold, without making any assumptions about compactness or properness.\end{remark}
\begin{remark} In the definition of local rigidity, the Hamiltonians involved in assessing whether or not $p$ is rigid with respect to $N\cap W$ all have compact support contained in $[0,1]\times W$, and so can be regarded as smooth functions either on $[0,1]\times W$ or (by extension by zero) on $[0,1]\times M$.\end{remark}   

Our applications of rigidity and local rigidity to $C^0$ symplectic geometry are based on the following fact.

\begin{prop}\label{invt} If $(M,\omega)$ is a symplectic manifold, $N$ is a locally closed subset, $p\in N$, and $\psi\co M\to M$ is a symplectic homeomorphism, then $p$ is locally rigid with respect to $N$ if and only if $\psi(p)$ is locally rigid with respect to $\psi(N)$.  Moreover the previous sentence remains true if all appearances of ``locally closed'' and ``locally rigid'' are replaced respectively by ``closed'' and ``rigid.'' 
\end{prop}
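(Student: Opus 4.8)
The plan is to reduce everything to the behaviour of $e_M$ under conjugation by the symplectic diffeomorphisms $\psi_n\to\psi$ approximating $\psi$, and to treat the two halves of the statement (closed/rigid, then locally closed/locally rigid) in turn. The basic tool is that conjugation by a symplectomorphism preserves the Hofer norm: if $\sigma\in\mathrm{Symp}(M,\omega)$ and $\phi=\phi_H^1\in\mathrm{Ham}(M,\omega)$ then $\sigma\phi\sigma^{-1}=\phi_{H\circ\sigma^{-1}}^1$ with $\|H\circ\sigma^{-1}\|=\|H\|$, so $\|\sigma\phi\sigma^{-1}\|=\|\phi\|$; since also $\sigma(\bar U)=\overline{\sigma(U)}$, this yields $e_M(\sigma(U),\sigma(N))=e_M(U,N)$. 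I will also use the standard facts that $\psi^{-1}$ is again a symplectic homeomorphism and that $\psi_n^{-1}\to\psi^{-1}$ uniformly on compact sets; the latter lets me prove only the forward implication, the converse following by applying it to $\psi^{-1}$.

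For the closed/rigid case, suppose $p$ is rigid with respect to the closed set $N$ (so $\psi(N)$ is closed) and let $U'$ be a neighbourhood of $\psi(p)$. Choose open $V\ni p$ with $\bar V$ compact and $\psi(\bar V)\subset U'$; rigidity gives $c:=e_M(V,N)>0$, and since $\overline{\psi(V)}=\psi(\bar V)\subset U'$, monotonicity of $e_M$ in its first argument reduces $e_M(U',\psi(N))>0$ to $e_M(\psi(V),\psi(N))\ge c$. So let $\phi(\overline{\psi(V)})\cap\psi(N)=\varnothing$; as $\phi(\overline{\psi(V)})$ is compact and $\psi(N)$ is closed and disjoint from it, they lie at a positive distance. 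Put $\phi_n=\psi_n^{-1}\phi\psi_n$, so $\|\phi_n\|=\|\phi\|$; I claim $\phi_n(\bar V)\cap N=\varnothing$ for large $n$, whence $\|\phi\|=\|\phi_n\|\ge e_M(V,N)=c$. Now $\phi_n(\bar V)\cap N=\varnothing$ is equivalent to $\phi(\psi_n(\bar V))\cap\psi_n(N)=\varnothing$, and $\phi(\psi_n(\bar V))$ stays Hausdorff-close to $\phi(\overline{\psi(V)})$. The one delicate point is that $N$ need not be compact, so I cannot invoke closeness of $\psi_n(N)$ to $\psi(N)$ directly; instead I argue by contradiction that any $x_n\in N$ with $\psi_n(x_n)\in\phi(\psi_n(\bar V))$ have images confined to a fixed compact set, so (using $\psi_n^{-1}\to\psi^{-1}$) the $x_n$ are precompact, and a subsequential limit $x_\infty\in N$ would give $\psi(x_\infty)\in\phi(\overline{\psi(V)})\cap\psi(N)$, contradicting the positive distance.

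For the locally closed/locally rigid case the scheme is identical but run inside relatively compact neighbourhoods. A witnessing neighbourhood $W$ for local rigidity of $p$ may be shrunk freely: if $W_0\subset W$, a Hamiltonian supported in $W_0$ keeps $\phi(\bar U)$ inside $W_0$, so displacing $\bar U$ from $N\cap W_0$ coincides with displacing it from $N\cap W$; hence I may take $\bar W$ compact. Choose $W'\ni\psi(p)$ with $\overline{W'}\subset\psi(W)$; then $\psi(N)\cap W'$ is closed in $W'$, and local closedness together with compactness of $\psi^{-1}(\overline{W'})\subset W$ makes $\psi(N)\cap\overline{W'}$ \emph{compact}. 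Given a neighbourhood $U'$ of $\psi(p)$ and a $\phi$ compactly supported in $W'$ displacing $\bar{U'}$ from $\psi(N)\cap W'$, fix $U\ni p$ with $\psi(\bar U)\subset U'$ and $\bar U\Subset W$, and repeat the conjugation: for large $n$, $\phi_n=\psi_n^{-1}\phi\psi_n$ is compactly supported in $W$ (since $\psi_n^{-1}(\mathrm{supp}\,\phi)$ is forced near the compact set $\psi^{-1}(\mathrm{supp}\,\phi)\subset W$), and by the same compactness-and-contradiction argument (now using compactness of $\psi(N)\cap\overline{W'}$) it satisfies $\phi_n(\bar U)\cap N=\varnothing$.

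The main obstacle is the final bookkeeping of this last case: reconciling the intrinsic Hofer norm $\|\cdot\|_W$ used to define rigidity inside $(W,\omega|_W)$ with the ambient norm $\|\phi_n\|$ that the conjugation controls. One inequality is elementary: a Hamiltonian with compact support in $W$ attains the value $0$ inside $W$, so its oscillation, and hence the Hofer norm of the generated diffeomorphism, is unchanged whether computed in $W$ or in $M$, giving $\|\chi\|_M\le\|\chi\|_W$ for $\chi$ supported in $W$. The substantive point is the reverse inequality, i.e. that the inclusion $\mathrm{Ham}(W,\omega|_W)\hookrightarrow\mathrm{Ham}(M,\omega)$ is isometric on maps supported in $W$; this is what converts the ambient bound on $\|\phi_n\|$ into the intrinsic lower bound supplied by local rigidity, via $\|\phi\|_{W'}\ge\|\phi\|_M=\|\phi_n\|_M=\|\phi_n\|_W\ge e_W(U,N\cap W)>0$, where $e_W$ denotes the displacement energy computed in $(W,\omega|_W)$. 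I expect to isolate this isometric-restriction property as a lemma (it is standard in the $C^0$-symplectic literature); with it in hand the bound reads $e_{W'}(U',\psi(N)\cap W')\ge e_W(U,N\cap W)>0$, showing $\psi(p)$ is rigid with respect to $\psi(N)\cap W'$, hence locally rigid with respect to $\psi(N)$. Applying the whole argument to $\psi^{-1}$ gives the reverse implications in both halves.
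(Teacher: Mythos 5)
Your overall architecture (reduce to the forward implication, conjugate by the approximating symplectomorphisms $\psi_n$, use invariance of the Hofer norm under conjugation, and persist the displacement condition for large $n$ by a compactness argument) is the same as the paper's, and your treatment of the closed/rigid case is correct -- indeed your contradiction/precompactness argument supplies detail that the paper leaves implicit when it invokes $C^0$-convergence of $\psi_m^{-1}\phi\psi_m$. The genuine gap is in the final bookkeeping of the locally closed case: your chain $\|\phi\|_{W'}\ge\|\phi\|_M=\|\phi_n\|_M=\|\phi_n\|_W\ge e_W(U,N\cap W)$ requires the inequality $\|\phi_n\|_M\ge\|\phi_n\|_W$, i.e.\ that the inclusion $\mathrm{Ham}_c(W,\omega|_W)\hookrightarrow\mathrm{Ham}(M,\omega)$ is isometric for the Hofer norm. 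This is not a standard citable fact; only the trivial direction $\|\phi_n\|_M\le\|\phi_n\|_W$ is elementary, and whether the intrinsic and extrinsic Hofer norms of a compactly supported map agree is in general a well-known difficult (open) question, since a globally defined Hamiltonian may generate $\phi_n$ more cheaply by moving points out of $W$ at intermediate times. So as written, the step that ``converts the ambient bound into the intrinsic lower bound'' does not go through, and the whole detour through the ambient norm on $M$ is the source of the problem.

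The repair is exactly what the paper does, and you essentially already have the ingredients: conjugate at the level of generating Hamiltonians rather than comparing norms of diffeomorphisms. If $H$ is any Hamiltonian with compact support in $[0,1]\times W'$ generating $\phi$, then $K_n(t,x)=H(t,\psi_n(x))$ generates $\phi_n=\psi_n^{-1}\phi\psi_n$, has $\|K_n\|=\|H\|$ (composition with a bijection preserves the oscillation), and for large $n$ has compact support in $[0,1]\times W$ because $\psi_n^{-1}(\mathrm{supp}\,H)$ is eventually contained in a small neighborhood of the compact set $\psi^{-1}(\mathrm{supp}\,H)\subset W$ -- the same point you already use to place $\phi_n$ in $\mathrm{Ham}(W)$. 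Combined with your displacement statement $\phi_n(\bar U)\cap(N\cap W)=\varnothing$, this gives directly $e_W(U,N\cap W)\le\|K_n\|=\|H\|$, and taking the infimum over such $H$ yields $e_{W'}(U',\psi(N)\cap W')\ge e_W(U,N\cap W)>0$ with both sides intrinsic, never invoking $\|\cdot\|_M$. With that substitution your argument closes; note also that this Hamiltonian-level conjugation is what the paper uses uniformly for both halves of the statement, via its single inequality $e_{\psi(W)}(V,\psi(N)\cap\psi(W))\ge e_W(\psi^{-1}(V),N\cap W)$ applied with $W=M$ for the closed case.
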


\begin{proof}
Note that it is sufficient to prove the forward implication, since then the reverse implication will follow formally as $\psi^{-1}$ is also a symplectic homeomorphism.
We will show that if $W\subset M$ is open, $N\cap W$ is closed in $W$, and $V$ is an open subset of $W$, then \begin{equation}\label{epres} e_{\psi(W)}(V,\psi(N)\cap \psi(W))\geq e_{W}(\psi^{-1}(V),N\cap W). \end{equation}  The second sentence of the proposition then follows by taking $W=M$, while the first sentence follows by taking $W$ such that $e_W(U,N\cap W)>0$ whenever $U$ is a neighborhood of $p$ in $N\cap W$ and setting $U=\psi^{-1}(V)$ for an arbitrary neighborhood $V$ of $\psi(p)$ in $\psi(N)\cap\psi(W)$.

To prove (\ref{epres}),  Let $\{\psi_m\}_{m=1}^{\infty}$ be a sequence of symplectic diffeomorphisms $C^0$-converging to the symplectic homeomorphism $\psi$, and suppose that $\phi\in \mathrm{Ham}(\psi(W),\omega)$ has $\phi(\bar{V})\cap (\psi(N)\cap \psi(W))=\varnothing$,  and hence \[ (\psi^{-1}\phi\psi)(\psi^{-1}(\bar{V}))\cap (N\cap W)=\varnothing.\] Now because \cite[Theorem 4]{Ar} shows that the homeomorphism group of a manifold is a topological group with respect to the compact-open topology, the sequence $\{\psi_{m}^{-1}\phi\psi_m\}_{m=1}^{\infty}$ $C^0$-converges to $\psi^{-1}\phi\psi$, so for all sufficiently large $m$ it also holds that \begin{equation}\label{psimest} (\psi_{m}^{-1}\phi\psi_m)(\psi^{-1}(\bar{V}))\cap (N\cap W)=\varnothing.\end{equation}  Moreover if we choose $H\co [0,1]\times M\to\R$ having support compactly contained in $[0,1]\times \psi(W)$ such that $\phi_{H}^{1}=\phi$, then the continuous function $K\co [0,1]\times M\to \R$ defined by $K(t,x)=H(t,\psi(x))$ will have support compactly contained in $[0,1]\times W$, and hence for $m$ sufficiently large the smooth functions $K_m\co [0,1]\times M\to \R$ defined by  $K_m(t,x)=H(t,\psi_m(x))$ will also have support compactly contained in $[0,1]\times W$.  

A standard calculation shows that $\phi_{K_m}^{1}=\psi_{m}^{-1}\phi\psi_{m}$.  So by (\ref{psimest}) we have $\|\psi_{m}^{-1}\phi\psi_m\|\geq e_{W}(\psi^{-1}(\bar{V}),N)$.  But $\|\psi_{m}^{-1}\phi\psi_m\|=\|\phi\|$, so we have shown that if $\phi\in \mathrm{Ham}(\psi(W),\omega)$ and $\phi(\bar{V})\cap(\psi(N)\cap\psi(W))$ then $\|\phi\|\geq e_W(\psi^{-1}(V),N)$, which is precisely the desired statement (\ref{epres}). \end{proof}

\section{Local rigidity and coisotropic submanifolds}

The following result gives an interpretation of what it means for a submanifold of a symplectic manifold to be coisotropic in terms that do not reference the tangent spaces to the submanifold (compare \cite[Section 6]{HLS}):

\begin{theorem}\label{coisochar}
Let $(M,\omega)$ be a symplectic manifold and $N$ a smooth submanifold.  Then $N$ is coisotropic if and only if every point $p\in N$ is locally rigid with respect to $N$.
\end{theorem}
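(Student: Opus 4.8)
The plan is to treat both implications as local statements around a single point and reduce them to a pointwise dichotomy in linear symplectic algebra. Since local rigidity at $p$ depends only on $N$ in an arbitrarily small neighborhood, I would fix a Darboux chart centered at $p$ and work in $(\R^{2n},\omega_0)$ with $p$ at the origin. The organizing observation is that $N$ is coisotropic at $p$ if and only if $(T_pN)^\omega$ is an \emph{isotropic} subspace: indeed $(T_pN)^\omega$ is isotropic exactly when $(T_pN)^\omega\subseteq((T_pN)^\omega)^\omega=T_pN$. Equivalently, $N$ fails to be coisotropic at $p$ precisely when $(T_pN)^\omega$ contains a symplectic $2$-plane $Q$, in which case $T_pN\subseteq Q^\omega$ and $Q\cap T_pN=0$. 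This reformulation is what I would use to drive both directions.

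For the forward direction (coisotropic $\Rightarrow$ every point locally rigid), I would invoke the local normal form for coisotropic submanifolds: after shrinking to a suitable neighborhood $W$ and applying a symplectomorphism, $N\cap W=\{p_1=\cdots=p_c=0\}$ where $c$ is the codimension. This model contains the linear Lagrangian $L=\{p_1=\cdots=p_n=0\}$ through $p$, and $L\subseteq N\cap W$ is closed in $W$. Since any $\phi$ displacing $\bar U$ off $N\cap W$ in particular displaces $\bar U$ off $L$, monotonicity of $e$ gives $e_W(U,N\cap W)\ge e_W(U,L)$ for every neighborhood $U$ of $p$. The proof then closes by the energy--capacity inequality for Lagrangian submanifolds, which guarantees that a neighborhood of a point of $L$ has positive displacement energy off $L$, i.e. $e_W(U,L)>0$. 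This is the one genuinely analytic input, and I expect it to be the main obstacle: everything else is soft, whereas this positivity (the classical displacement-energy lower bound for Lagrangians, applied to the Lagrangian disk $L$) is where the hard rigidity phenomenon is encoded.

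For the reverse direction I would argue the contrapositive: if $N$ is not coisotropic, I exhibit a non-coisotropic point $p$ that is not locally rigid. Writing $N=\{F_1=\cdots=F_m=0\}$ near $p$ with the $dF_i$ spanning the conormal, the failure of the coisotropic condition means $\omega$ is nondegenerate in some direction on $(T_pN)^\omega$, so I can select two defining functions (or linear combinations) with $\{F_1,F_2\}(p)\ne 0$. Then $S=\{F_1=F_2=0\}$ is a codimension-$2$ \emph{symplectic} submanifold containing $N$ near $p$, since $(T_pS)^\omega=\mathrm{span}(X_{F_1},X_{F_2})$ carries the nondegenerate pairing $\{F_1,F_2\}(p)$. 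By the symplectic neighborhood theorem I may straighten $S$ to a linear symplectic subspace, so that after a symplectomorphism $N$ lies inside $\{q_1=p_1=0\}$; in these coordinates the projection of $N$ to the symplectic $2$-plane $Q$ (the $q_1p_1$-plane) is the single point $0$.

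Finally, with $N$ confined to $\{q_1=p_1=0\}$, I would displace a \emph{fixed} ball $U=B(\rho_*)$ off $N$ by Hamiltonians depending only on the $Q$-coordinates: hitting $N$ forces the $Q$-coordinate of the image to vanish, so it suffices to push the origin of $Q$ out of the (compact) $Q$-shadow of $\bar U$. Because one is only displacing a \emph{point}, a thin-channel Hamiltonian supported in a tube of vanishing width accomplishes this with Hofer norm tending to $0$, giving $e(U,N)=0$ and hence non-rigidity at $p$; symplectomorphism-invariance of $e$ (immediate, and a special case of the conjugation computation underlying Proposition~\ref{invt}) transports this conclusion back through the straightening map. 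The crux of this direction is the passage from first-order data to an honest zero-energy displacement: a priori the curvature of $N$ spreads its $Q$-footprint into a full two-dimensional blob, which cannot be cleared for free, and the role of embedding $N$ into the flat symplectic subspace $S$ is precisely to collapse that footprint to a point so that the thin-channel trick applies.
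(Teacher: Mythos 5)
Your two implications fare differently against the paper. On the forward implication (coisotropic $\Rightarrow$ locally rigid) your route is essentially the paper's: produce a Lagrangian through $p$ lying inside $N$ locally (you extract it from the coisotropic normal form $\{p_1=\cdots=p_c=0\}$, the paper from Lemma \ref{lagexists} via an isotropic-extension theorem; either works), then use the monotonicity $e_W(U,N\cap W)\ge e_W(U,L)$ and positivity of a local Lagrangian displacement energy. On the backward implication your route is genuinely different: the paper gets it softly from Proposition \ref{locrigpres} (local rigidity forces $p$ to be carried along by flows of Hamiltonians vanishing on $N$, proved by the $\beta_m\circ K$ cutoff trick), then chooses $H$ with $H|_N\equiv 0$ and $X_H(p)\notin T_pN$; you instead trap $N$ locally inside a codimension-two symplectic submanifold $S=\{F_1=F_2=0\}$ with $\{F_1,F_2\}(p)\neq 0$, straighten $S$, and displace a small ball off $N$ with a thin-channel Hamiltonian in the transverse $2$-plane of arbitrarily small Hofer norm. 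That argument is sound and closer in spirit to \cite{HLS}; what the paper's version buys is that Proposition \ref{locrigpres} applies to arbitrary locally closed sets (it is reused in Remark \ref{charfol}), while yours is more explicit but needs two routine patches you do not spell out: a Hamiltonian depending only on the $Q$-coordinates is not compactly supported in $W$, so it must be cut off in the transverse variables in a region the relevant trajectories do not reach (exactly as with the cutoff $\chi$ in the paper's proof of Proposition \ref{locrigpres}); and failure of local rigidity must be exhibited for \emph{every} admissible neighborhood $W$ of $p$, so the channel construction has to be run inside an arbitrarily small product chart contained in $W$.

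The one substantive gap is in the forward direction, at exactly the step you identify as the main analytic input. The classical displacement-energy lower bounds for Lagrangians (Chekanov, Oh, and \cite[Corollary 4.10]{U1}, quoted as Lemma \ref{encap}) concern \emph{compact} Lagrangians in geometrically bounded manifolds, whereas your $L$ is an open Lagrangian disk, closed only in the small ball $W$; the statement you need, $e_W(U,L)>0$ for Hamiltonians supported in $W$, is not literally any of those theorems. Bridging this is precisely the content of the paper's Corollary \ref{lagmain}: by Darboux--Weinstein, a neighborhood of $p$ in $(W,L)$ is symplectomorphic to a neighborhood of a point on a compact Lagrangian $L'$ in a geometrically bounded $(M',\omega')$, and then $e_W(U,L\cap W)=e_{W'}(\Psi(U),L'\cap W')\ge e_{M'}(\Psi(U),L')>0$ by Lemma \ref{encap}, using that the Hofer norm does not increase when a compactly supported Hamiltonian on $W'$ is regarded as one on $M'$. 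Some such transfer (or an intrinsically local result in the style of \cite{Oh18}) must be inserted where you write ``the classical displacement-energy lower bound applied to the Lagrangian disk''; as written, that citation does not close the proof.
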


As already noted, this quickly implies the following:
\begin{cor} \label{hls} \cite{HLS}  If $(M,\omega)$ is a symplectic manifold, $N$ is a coisotropic submanifold, and $\psi\co M\to M$ is a symplectic homeomorphism with the property that $\psi(N)$ is a smooth submanifold, then $\psi(N)$ is also coisotropic.
\end{cor}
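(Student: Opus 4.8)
The plan is to derive the corollary formally from the two results already in hand: Theorem~\ref{coisochar}, which characterizes coisotropy of a smooth submanifold by local rigidity of all its points, and Proposition~\ref{invt}, which says that local rigidity is preserved under symplectic homeomorphisms. No new analytic input is needed; the entire content of the corollary is a bookkeeping chain linking these two statements, so I would present it as a short three-step argument.

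First I would invoke the forward direction of Theorem~\ref{coisochar}: since $N$ is by hypothesis a coisotropic \emph{smooth} submanifold, every point $p\in N$ is locally rigid with respect to $N$. Here I would note in passing that $N$, being a smooth submanifold, is automatically locally closed (the remark following Definition~\ref{rigdfn}), so it is a legitimate input to Proposition~\ref{invt}. Second, I would apply Proposition~\ref{invt} pointwise: for each $p\in N$, local rigidity of $p$ with respect to $N$ is equivalent to local rigidity of $\psi(p)$ with respect to $\psi(N)$. As $p$ ranges over $N$, the points $\psi(p)$ range over all of $\psi(N)$, so this shows that \emph{every} point of $\psi(N)$ is locally rigid with respect to $\psi(N)$. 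Third, I would feed this back into Theorem~\ref{coisochar}, now using its reverse direction: because $\psi(N)$ is assumed to be a smooth submanifold, the hypothesis ``all points locally rigid'' forces $\psi(N)$ to be coisotropic, which is the desired conclusion.

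There is no genuine obstacle here, since the difficulty has been isolated into Theorem~\ref{coisochar} and Proposition~\ref{invt}; the only points requiring a word of care are the verifications that the hypotheses of those two results are met. The local closedness required by Proposition~\ref{invt} holds for free, and the two applications of Theorem~\ref{coisochar} both require a \emph{smooth} submanifold---this is exactly why the smoothness of $\psi(N)$ must be imposed as a hypothesis rather than deduced, since $\psi$ being merely a symplectic homeomorphism need not carry the smooth submanifold $N$ to a smooth submanifold. By contrast, local rigidity itself transfers across $\psi$ for arbitrary locally closed sets, so it is precisely the smoothness assumption on $\psi(N)$ that allows the transferred rigidity to be reinterpreted as coisotropy.
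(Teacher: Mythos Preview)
Your proposal is correct and follows exactly the same three-step chain as the paper's own proof: forward implication of Theorem~\ref{coisochar}, then Proposition~\ref{invt}, then the backward implication of Theorem~\ref{coisochar}. The additional remarks you include about local closedness and about why the smoothness of $\psi(N)$ must be assumed are accurate and helpful elaborations, but the underlying argument is identical to the paper's.
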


\begin{proof}[Proof of Corollary \ref{hls}, assuming Theorem \ref{coisochar}]
By the forward implication of Theorem \ref{coisochar}, every point of $N$ is locally rigid with respect to $N$, so by Proposition \ref{invt} every point of $\psi(N)$ is locally rigid with respect to $\psi(N)$, which implies that $\psi(N)$ is coisotropic  by the backward implication of Theorem \ref{coisochar}.
\end{proof}

We will prove Theorem \ref{coisochar} momentarily. It seems to the author that this proof of Corollary \ref{hls} is significantly simpler than the original one, though he readily acknowledges that the overall outline of the argument in \cite{HLS} was a considerable influence on the argument given here. After the first version of this paper was finished, Seyfaddini explained to the author another short proof of the special case of Corollary \ref{hls} for closed Lagrangian submanifolds; his argument, like ours, has Lemma \ref{encap} as its main technical ingredient, though it is used in a somewhat different way.  Note that the Eliashberg-Gromov theorem can be seen as a special case of Corollary \ref{hls} because a diffeomorphism of $M$ belongs to $\mathrm{Symp}(M,\omega)$ if and only if its graph is Lagrangian in $(M\times M,(-\omega)\times\omega)$.   
In \cite{HLS} it is also shown that, under the hypotheses of Corollary \ref{hls}, $\psi$ maps the characteristic foliation of $N$ to the characteristic foliation of $\psi(N)$.  We sketch in Remark \ref{charfol} how to deduce the same conclusion using reasoning about local rigidity. 

The backward implication of Theorem \ref{coisochar} is an easy consequence of the following general statement, which we will prove by a variation on an elementary construction from the proofs of \cite[Lemma 4.3]{U1} and \cite[Lemma 2.2]{U2}:

\begin{prop}\label{locrigpres}
Let $(M,\omega)$ be a symplectic manifold and let $N\subset M$ be locally closed, and suppose that $p\in N$ is locally rigid with respect to $N$.  Then for every compactly supported smooth function $H\co M\to \R$ such that $H|_N\equiv 0$ there is $\ep>0$ such that the time-$t$ map $\phi_{H}^{t}$ of the Hamiltonian flow of $H$ obeys $\phi_{H}^{t}(p)\in N$ for all $t\in [0,\ep]$.
\end{prop}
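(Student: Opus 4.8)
The plan is to argue by contradiction: assuming the conclusion fails, I will produce a \emph{fixed} neighborhood of $p$ that can be displaced off $N$ with arbitrarily small Hofer energy, contradicting the rigidity of $p$ with respect to $N\cap W$, where $W$ is the window supplied by local rigidity. Write $\gamma(t):=\phi_{H}^{t}(p)$. If the conclusion fails, then choosing $\ep_0$ small enough that $\gamma([0,\ep_0])\subseteq W$, there is some $t_0\in(0,\ep_0]$ with $q:=\phi_{H}^{t_0}(p)\notin N$. Since $N\cap W$ is closed in $W$ and $q\in W$, the number $2\rho:=\mathrm{dist}(q,N\cap W)$ is positive.

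The key observation is that, because $H$ is autonomous, it is a conserved quantity of its own flow, and $N\subseteq\{H=0\}$. This suggests cutting $H$ off not in space but in its own value. Fix a smooth $\psi\co\R\to[0,1]$ equal to $1$ on $[-1,1]$ and to $0$ outside $[-2,2]$, set $\chi_\delta(u)=\psi(u/\delta)$, and consider $G_\delta=\chi_\delta(H)\,H=f_\delta(H)$ with $f_\delta(u)=u\chi_\delta(u)$. Three features make this work at once. First, $|G_\delta|\le 2\delta$ everywhere, so $\osc(G_\delta)\le 4\delta$. Second, since $G_\delta$ is a function of $H$ we have $X_{G_\delta}=f_\delta'(H)X_H$, so the flow of $G_\delta$ preserves every level set $\{H=u\}$ and acts on it as the $H$-flow run for the rescaled time $f_\delta'(u)\,t$. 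Third, $f_\delta'(0)=\chi_\delta(0)=1$, so on $\{H=0\}$ the map $\phi_{G_\delta}^{t_0}$ coincides exactly with $\phi_{H}^{t_0}$; in particular it still carries $p$ to $q\notin N$.

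Now take the fixed neighborhood to be a ball $U=B(p,r)$ with $r$ small enough that $\phi_{H}^{t_0}(\bar U)\subseteq B(q,\rho)$, possible by continuity since $\phi_{H}^{t_0}(p)=q$. Then $\phi_{G_\delta}^{t_0}(\bar U)\cap N=\varnothing$ for \emph{every} $\delta>0$: a point of $\bar U$ with $H\ne 0$ is carried to another point of the same nonzero level set and hence stays off $N\subseteq\{H=0\}$, while a point of $\bar U$ with $H=0$ is carried by $\phi_{H}^{t_0}$ into $B(q,\rho)$, which is disjoint from $N$ by the choice of $\rho$. Consequently $\phi_{G_\delta}^{t_0}$ displaces $\bar U$ off $N$ with $\|\phi_{G_\delta}^{t_0}\|\le t_0\,\osc(G_\delta)\le 4\delta t_0\to 0$ as $\delta\to 0$, forcing $e_{W}(U,N\cap W)=0$ and contradicting the rigidity of $p$.

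The remaining point, which I expect to be a technical nuisance rather than a genuine obstacle, is to ensure the displacing Hamiltonians are genuinely supported in $W$, so that they compute $e_W$ (the proposition only gives $H\co M\to\R$, whose support may exceed $W$). This is a routine localization: for $t_0$ small the $\phi_{G_\delta}^{t_0}$-trajectories of $\bar U$ stay in a fixed compact $K\subseteq W$ — here one uses that $f_\delta'$ is bounded uniformly in $\delta$, since $|u\chi_\delta'(u)|\le 2\|\psi'\|_\infty$, so all trajectories remain in a $\delta$-independent neighborhood of $\gamma([0,t_0])$ — and one then multiplies $G_\delta$ by a spatial cutoff equal to $1$ near $K$ and supported in $W$, changing neither the relevant trajectories nor the bound $\osc\le 4\delta$. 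The conceptual heart is the decision to cut off in the value of $H$: a naive spatial cutoff keeps the oscillation bounded below on any fixed neighborhood and so can never beat the positive displacement energy, whereas the level-set cutoff exploits $H|_N\equiv 0$ to make the oscillation vanish while leaving intact exactly the motion that ejects $p$ from $N$.
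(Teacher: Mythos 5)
Your argument is correct in substance and is built on the same core construction as the paper's proof: cutting the Hamiltonian off in its own value so that the displacing Hamiltonian has oscillation $O(\delta)$ while still ejecting $p$ from $N$. Indeed the paper's small Hamiltonian $K-\beta_m\circ K$ (where $K=\chi H$ is a spatially localized copy of $H$) is exactly your $f_\delta$ applied to $K$. The executions differ in two respects. First, the paper does the spatial cutoff \emph{before} the value cutoff and verifies displacement algebraically: it writes the small map as $(\phi_{\beta_m\circ K}^{1})^{-1}\phi_{K}^{1}$ and uses only that $\phi_{\beta_m\circ K}^{1}$ fixes a neighborhood of $N$ pointwise, which completely sidesteps any need to control trajectories of the small Hamiltonian; you instead verify displacement dynamically, via conservation of $H$ under the flow of $f_\delta(H)$ and the identity $\phi_{f_\delta(H)}^{t}=\phi_{H}^{t}$ on $\{H=0\}$, and then localize at the end. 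Second, because of that ordering, your localization sketch needs one small correction (not a genuine gap, since you flagged it and the fix is routine): $f_\delta'(u)=\psi(u/\delta)+(u/\delta)\psi'(u/\delta)$ takes values in $[-2\|\psi'\|_\infty,\,1+2\|\psi'\|_\infty]$, in particular negative ones, so the $\phi_{G_\delta}^{t}$-trajectories of points of $\bar{U}$ lie along their $H$-orbits over the time interval $[-Ct_0,Ct_0]$ with $C=\sup_\delta\|f_\delta'\|_\infty$, not merely near $\gamma([0,t_0])$. You should therefore choose $\ep_0$ at the outset so that $\phi_{H}^{s}(p)\in W$ for all $|s|\leq C\ep_0$ (possible since $C$ depends only on the fixed profile $\psi$), and then shrink $U$ so that the corresponding orbits of all points of $\bar{U}$ stay in a compact subset of $W$ where the spatial cutoff equals $1$. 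With that adjustment your proof goes through; it is an execution-level variant of the paper's argument, trading the paper's group-theoretic factorization for a conservation-of-energy argument at the cost of the uniform trajectory control just described.
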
 

\begin{proof}  Choose an open set $W$ around $p$ such that $N\cap W$ is closed in $W$ and $e_W(U,N\cap W)>0$ for every  neighborhood $U$ of $p$ in $W$. Let $H\co M\to \R$ be any compactly supported smooth function with $H|_N\equiv 0$.  Choose $\ep_0>0$ such that $\phi_{H}^{t}(p)\in W$ for all $t\in [0,\ep_0]$.  Arguing by contradiction, if the conclusion of the proposition were false we could find a positive $T\leq \ep_0$ such that $\phi_{H}^{T}(p)\notin N$.  Since $\phi_{TH}^{t}=\phi_{H}^{tT}$ we can replace $H$ by $TH$ and $\ep_0$ by $\frac{\ep_0}{T}\geq 1$ and thus reduce to the case that $T=1$. 

After this reduction we have $\phi_{H}^{t}(p)\in W$ for all $t\in [0,1]$ and $\phi_{H}^{1}(p)\notin N$.  
Since $W$ is open in $M$ and $N\cap W$ is closed in $W$ we can then find a neighborhood $V$ of $p$ such that $\phi_{H}^{t}(\bar{V})\subset W$ for all $t\in [0,1]$ and $\phi_{H}^{1}(\bar{V})\cap N=\varnothing.$ Now let $\chi\co M\to \R$ be a smooth function whose support is compact and contained in $W$ and which restricts as $1$ to a neighborhood of $\cup_{t\in [0,1]}\phi_{H}^{t}(\bar{V})$.  The latter property implies that $\phi_{\chi H}^{1}(\bar{V})=\phi_{H}^{1}(\bar{V})$ so, setting $K=\chi H$, we have \begin{equation}\label{vdisp} \mathrm{supp}(K)\subset W\mbox{ and }\phi_{K}^{1}(\bar{V})\cap(N\cap W)=\varnothing.\end{equation}

Now for $m\in \Z_+$ let $\beta_m\co \R\to \R$ be a monotone smooth function with $\beta_m(s)=s$ for $|s|\geq\frac{2}{m}$ and $\beta_m(s)=0$ for $|s|\leq \frac{1}{m}$, and let $K_m=\beta_m\circ K$. So $K_m$, like $K$, has support contained in $W$, while because $K|_{N}=0$ we see that $K_m$ vanishes on a neighborhood of $ N$, and hence $\phi_{K_m}^{1}$ restricts to $N$ as the identity.  So (\ref{vdisp}) implies that \[ \left((\phi_{K_m}^{1})^{-1}\phi_{K}^{1}\right)(\bar{V})\cap (N\cap W)=\varnothing.\]  But $(\phi_{K_m}^{1})^{-1} \phi_{K}^{1}$ is the time-one map of the  Hamiltonian $K-\beta_m\circ K$, which only takes values in the interval $[-\frac{2}{m},\frac{2}{m}]$. Thus $e_W(V,N\cap W)\leq \frac{4}{m}$.  That this holds holds for all $m\in \Z_+$ contradicts the fact that $e_W(V,N\cap W)>0$ according to the first sentence of the proof.  This contradiction proves the proposition.
\end{proof}

\begin{proof}[Proof of the backward implication of Theorem \ref{coisochar}]
Proving the contrapositive, if $N$ is not coisotropic then we may choose $p\in N$ and $v\in T_pN^{\omega}\setminus T_pN$, where $T_{p}N^{\omega}$ is the $\omega$-orthogonal complement to $T_pN$.  Then let $H\co M\to\R$ be a compactly supported smooth function such that $H|_N\equiv 0$ and $(dH)_p(v)\neq 0$, as is possible since $v\notin T_pN$.  The Hamiltonian vector field $X_H$ of $H$ then obeys $\omega_{p}(X_H,v)\neq 0$, which implies that $X_H\notin T_pN$ since $v\in T_pN^{\omega}$ and $T_pN=(T_pN^{\omega})^{\omega}$.  But then we will have $\phi_{H}^{t}(p)\notin N$ for all sufficiently small nonzero $t$, which implies by Proposition \ref{locrigpres} that $p$ is not locally rigid with respect to $N$.
\end{proof}

\begin{proof}[Proof of the forward implication of Theorem \ref{coisochar}]  The main ingredient is the following, which is the only piece of ``hard symplectic topology'' used in either this section or the previous one. The proof in \cite{U1} uses arguments modeled on the proof of a conceptually similar result in \cite{Oh97}; see also \cite{Oh18} for a version of Lemma \ref{encap} that gives a more precise lower bound.

\begin{lemma}\label{encap} (\cite[Corollary 4.10]{U1})
Suppose that $L$ is a compact Lagrangian submanifold of a geometrically bounded symplectic manifold $(M,\omega)$.  Then for every open set $U$ with $U\cap L\neq\varnothing$ we have $e_M(U,L)>0$.
\end{lemma}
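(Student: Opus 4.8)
The plan is to prove Lemma \ref{encap} in its contrapositive form as an energy lower bound: I will produce a constant $c>0$, depending only on $L$ and $U$, such that $\|\phi\|\geq c$ for every $\phi\in\mathrm{Ham}(M,\omega)$ with $\phi(\bar U)\cap L=\varnothing$; this is exactly the assertion $e_M(U,L)\geq c>0$. The first, purely formal, step is a reduction to a small ball. Fix a point $p\in U\cap L$ and, using a Darboux chart adapted to $L$, choose an open ball $B$ centered at $p$ with $\bar B\subset U$ in which $L$ appears as a Lagrangian coordinate plane. Since any $\phi$ displacing $\bar U$ off $L$ also displaces the smaller set $\bar B$ off $L$, the set of such $\phi$ shrinks and hence $e_M(U,L)\geq e_M(B,L)$; it therefore suffices to bound $e_M(B,L)$ below by a positive constant depending only on $B$ and $L$.

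For the core estimate I would use the Lagrangian Floer theory of the compact Lagrangian $L$ in $(M,\omega)$. The geometric boundedness hypothesis enters here and essentially only here: it is precisely the condition guaranteeing that, for a suitably compatible almost complex structure, $J$-holomorphic curves with boundary on the compact $L$ and with a priori bounded energy remain in a fixed compact region, so that the Gromov compactness underlying Floer theory is available on the noncompact $M$. Granting this, one attaches to $L$ the filtered Floer-theoretic data of \cite{U1} (modeled on \cite{Oh97}) and formulates a relative capacity $c(B,L)>0$ of $B$ with respect to $L$ --- concretely, the positive action difference detected by a nonnegative bump function supported in $B$ and equal to a large constant near $p$. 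The two facts I need are: (i) an a priori energy estimate bounding this relative capacity above by the Hofer norm of any Hamiltonian diffeomorphism that displaces $\bar B$ off $L$, which is the energy--capacity inequality and is proved via the triangle inequality and conjugation invariance of the spectral/boundary-depth invariants; and (ii) the strict positivity $c(B,L)>0$. Positivity in (ii) rests on Gromov's monotonicity inequality: the Floer solution forced to exist by a nonzero relative capacity has a boundary end on $L$ inside $\bar B$ and must reach $\partial B$, so its symplectic area is bounded below by a constant of order $(\text{radius of }B)^2$ determined by the local geometry of the Darboux chart. Combining (i) and (ii) yields $\|\phi\|\geq c(B,L)$ for every displacing $\phi$, and taking the infimum gives $e_M(B,L)\geq c(B,L)>0$, as required.

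The main obstacle is the Floer-theoretic input behind (ii): establishing that the relative capacity is genuinely positive, equivalently that the relevant local Floer class does not die. This is where holomorphic-disc bubbling must be controlled and where the compactness afforded by geometric boundedness is indispensable; it is the single piece of hard symplectic topology in the argument, and is exactly the content imported from \cite[Corollary 4.10]{U1}. By contrast, the reduction to a ball and the monotonicity lower bound on curve area are soft once the relevant holomorphic curve is known to exist.
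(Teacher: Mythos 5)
The paper does not actually prove Lemma \ref{encap}: the statement is imported verbatim from \cite[Corollary 4.10]{U1}, and the surrounding text explicitly flags it as the only piece of ``hard symplectic topology'' used, pointing to \cite{Oh97} and \cite{Oh18} for the underlying holomorphic-curve arguments. So the benchmark your attempt is being compared against is simply a citation.

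Measured against that, your proposal is consistent with the paper's treatment but does not constitute an independent proof, and as written it is circular: your decisive step (ii) --- positivity of the relative capacity, i.e.\ the nonvanishing of the relevant Floer-theoretic class --- is justified in your final paragraph by declaring it to be ``exactly the content imported from \cite[Corollary 4.10]{U1}'', which is the very statement to be proved. Everything you supply around that black box is soft: the reduction $e_M(U,L)\geq e_M(B,L)$ to a Darboux ball $B$ is correct in direction (a diffeomorphism displacing $\bar{U}$ off $L$ also displaces $\bar{B}$, so the infimum over the smaller set of diffeomorphisms is larger) but is unnecessary if one cites the full statement anyway; and the energy--capacity and monotonicity discussion is a plausible schematic of what happens inside \cite{U1} and \cite{Oh97}, not an argument you have actually carried out. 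So you should either treat the lemma as a citation, as the paper does --- in which case one sentence suffices --- or commit to proving it, in which case the entire Floer-theoretic core (construction of the filtered invariants for a compact Lagrangian in a geometrically bounded manifold, the a priori energy estimate, Gromov compactness in the noncompact setting, and the nonvanishing of the relevant class) remains to be supplied.
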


In the language of this paper it follows immediately that, under the hypotheses of Lemma \ref{encap}, every point of $L$ is rigid with respect to $L$. If the hypotheses are relaxed this is generally no longer true (see Remark \ref{symplectization}) but we can infer the following:

\begin{cor}\label{lagmain} Let $L$ be any Lagrangian submanifold of any symplectic manifold $(M,\omega)$.  Then every point of $L$ is \textbf{locally} rigid with respect to $L$.
\end{cor}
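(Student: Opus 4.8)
The plan is to reduce the assertion to the setting of Lemma \ref{encap} by localizing near $p$. The obstacle is exactly that Lemma \ref{encap} requires two features that an arbitrary Lagrangian $L\subset(M,\omega)$ need not have: $L$ need not be compact, and $(M,\omega)$ need not be geometrically bounded. Since local rigidity is by definition a statement about the restriction of the displacement-energy problem to an arbitrarily small neighborhood $W$ of $p$, I would first invoke the Weinstein Lagrangian neighborhood theorem (equivalently a Lagrangian Darboux chart) to produce a symplectomorphism $\Phi\co W\to\mathcal{O}$ from a neighborhood $W$ of $p$ onto an open box $\mathcal{O}$ around the origin in $(\R^{2n},\omega_{\mathrm{std}})$ carrying $L\cap W$ onto $(\R^{n}\times\{0\})\cap\mathcal{O}$ and $p$ to $0$. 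Because $L$ is a submanifold we may, after shrinking, assume $L\cap W$ is closed in $W$, so that to prove local rigidity of $p$ it suffices to show that $0$ is rigid with respect to $(\R^{n}\times\{0\})\cap\mathcal{O}$ inside $(\mathcal{O},\omega_{\mathrm{std}})$ and then transport this back through $\Phi$.

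The point of the box model is that its standard Lagrangian piece sits inside a genuine compact Lagrangian in a geometrically bounded manifold. Indeed, choosing $\mathbb{T}^{2n}=\R^{2n}/(c\Z)^{2n}$ with $c$ larger than all the side lengths of $\mathcal{O}$, the quotient projection restricts to a symplectic embedding of $\mathcal{O}$ onto an open subset of $X:=\mathbb{T}^{2n}$ (with its standard symplectic form) carrying $\R^{n}\times\{0\}$ into the Lagrangian subtorus $L_{0}=\mathbb{T}^{n}\times\{0\}$. Since $X$ is compact it is geometrically bounded, and $L_{0}$ is a compact Lagrangian, so Lemma \ref{encap} applies to the pair $(X,L_{0})$: for every open $U$ meeting $L_{0}$ we have $e_{X}(U,L_{0})>0$. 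I will identify $\mathcal{O}$ with its image, so that $(\R^{n}\times\{0\})\cap\mathcal{O}=L_{0}\cap\mathcal{O}$.

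It remains to pass from positivity of $e_{X}(\cdot,L_{0})$ to positivity of $e_{\mathcal{O}}(\cdot,L_{0}\cap\mathcal{O})$, and this is the one genuinely content-bearing step, where the direction of the inequality must be watched. Fix a neighborhood $U$ of $0$ with $\bar{U}\subset\mathcal{O}$ compact (so $U$ meets $L_{0}$), and let $\phi\in\mathrm{Ham}(\mathcal{O},\omega)$ displace $\bar{U}$ off $L_{0}\cap\mathcal{O}$. Since $\phi$ is generated by a Hamiltonian compactly supported in $\mathcal{O}$, extension by zero lets us regard $\phi\in\mathrm{Ham}(X,\omega)$ with support in $\mathcal{O}$ and with $\|\phi\|_{X}\leq\|\phi\|_{\mathcal{O}}$, because the infimum defining the Hofer norm on $X$ is taken over a larger class of Hamiltonians. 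As $\phi$ preserves $\mathcal{O}$ we have $\phi(\bar{U})\subset\mathcal{O}$, whence $\phi(\bar{U})\cap L_{0}=\phi(\bar{U})\cap(L_{0}\cap\mathcal{O})=\varnothing$; thus $\phi$ also displaces $\bar{U}$ off the full $L_{0}$ in $X$, giving $\|\phi\|_{\mathcal{O}}\geq\|\phi\|_{X}\geq e_{X}(U,L_{0})>0$. Taking the infimum over such $\phi$ yields $e_{\mathcal{O}}(U,L_{0}\cap\mathcal{O})\geq e_{X}(U,L_{0})>0$. Since any neighborhood of $0$ contains a smaller one $U'$ with compact closure in $\mathcal{O}$ and $e_{\mathcal{O}}(U,L_{0}\cap\mathcal{O})\geq e_{\mathcal{O}}(U',L_{0}\cap\mathcal{O})>0$, the point $0$ is rigid with respect to $L_{0}\cap\mathcal{O}$. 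Finally, because $\Phi$ is a symplectomorphism it intertwines Hamiltonian flows and preserves the Hofer norm, so $e_{W}(\Phi^{-1}(U),L\cap W)=e_{\mathcal{O}}(U,L_{0}\cap\mathcal{O})$ for every $U$; hence $p$ is rigid with respect to $L\cap W$ in $(W,\omega|_{W})$, which is precisely the statement that $p$ is locally rigid with respect to $L$.
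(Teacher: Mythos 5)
Your argument is correct and is essentially the paper's own proof: both use a Weinstein/Lagrangian Darboux chart to transfer the local displacement-energy question to a model pair, compare the local energy with the global one via extension of Hamiltonians by zero, and then invoke Lemma \ref{encap}. The only difference is that you instantiate the model concretely as the subtorus $\mathbb{T}^{n}\times\{0\}\subset\mathbb{T}^{2n}$, whereas the paper phrases it as transferring to an arbitrary compact Lagrangian in a geometrically bounded manifold; this is a cosmetic, not substantive, distinction.
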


\begin{proof} Using the Darboux-Weinstein theorem (see \emph{e.g.} \cite[Theorem 5.3.18]{AM} for a proof that avoids compactness assumptions on $L$) together with standard coordinate charts for cotangent bundles, we see that if $(M',\omega')$ is any $2n$-dimensional symplectic manifold, if $L'\subset M'$ is a Lagrangian submanifold, and if $p'\in L'$, there are  neighborhoods $W'$ of $p'$ in $M'$ and $\mathcal{O}'$ of the origin $\vec{0}$ in $\C^n$ together with a symplectomorphism $\Phi\co \mathcal{W'}\to\mathcal{O}'$ (with respect to the standard symplectic structure $\omega_0=\sum_jdx_j\wedge dy_j$ on $\C^n=\{\vec{x}+i\vec{y}\}$) such that $\Phi(L'\cap W')=\R^n\cap \mathcal{O}'$ and $\Phi(p)=\vec{0}$.  

Thus if $L,L'$ are each Lagrangian submanifolds of $2n$-dimensional symplectic manifolds $(M,\omega),(M',\omega')$ respectively and if $p\in L$ and $p'\in L'$, there are neighborhoods $W$ of $L$ and $W'$ of $L'$ and a symplectomorphism $\Psi\co W\to W'$ that sends $L\cap W$ to $L'\cap W'$ and $p$ to $p'$.  Apply this with $L$ equal to our given Lagrangian submanifold, with $L'$ equal to the reader's favorite compact nonempty $n$-dimensional Lagrangian submanifold of any geometrically bounded symplectic manifold, and with $p\in L$ and $p'\in L'$ arbitrary.   By the naturality of the Hofer norm it is immediate that, for any neighborhood $U\subset W$ of $p$, we will have \[ e_W(U,L\cap W)=e_{W'}(\Psi(U),L'\cap W').\]  But since the Hofer norm of an element of $\mathrm{Ham}(W',\omega'|_{W'})$ obviously does not increase when it is instead considered as an element of $\mathrm{Ham}(M',\omega')$, we trivially have $e_{W'}(\Psi(U),L'\cap W')\geq e_{M'}(\Psi(U),L')$.  Since $e_{M'}(\Psi(U),L')>0$ by Lemma \ref{encap}, we have thus shown that $e_{W}(U,L\cap W)>0$ for any neighborhood $U$ of $p$ contained in $W$.  Thus $p$ is locally rigid with respect to $L$.
\end{proof}

To complete the proof of Theorem \ref{coisochar} it remains only to generalize Corollary \ref{lagmain} from Lagrangian submanifolds to coisotropic ones.  The fact that Corollary \ref{lagmain} applies even to Lagrangian submanifolds that are not closed as subsets makes this a simple task, based on the following lemma.

\begin{lemma}\label{lagexists} If $N$ is a coisotropic submanifold of a symplectic manifold $(M,\omega)$ and if $p\in N$ then there is a Lagrangian submanifold $L$ of $M$ such that $p\in L\subset N$. \end{lemma}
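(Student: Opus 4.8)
The plan is to realize $L$ as the preimage of a Lagrangian submanifold under the local symplectic reduction along the characteristic foliation of $N$. Write $k=\mathrm{codim}\,N$ (so $k\leq n$ since $N$ is coisotropic), let $j\co N\hookrightarrow M$ be the inclusion, and set $\sigma=j^*\omega$. Because $N$ is coisotropic, at each $q\in N$ we have $T_qN^{\omega}\subset T_qN$, so the kernel of $\sigma_q$ equals $T_qN\cap T_qN^{\omega}=T_qN^{\omega}$, the $k$-dimensional characteristic subspace $D_q$. Thus $\sigma$ is a closed $2$-form on $N$ of constant rank $2(n-k)$ whose kernel is the characteristic distribution $D=TN^{\omega}$.

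First I would verify that $D$ is involutive and that $\sigma$ descends to the leaf space. Involutivity is immediate from $d\sigma=0$: if $X,Y$ are sections of $D=\ker\sigma$ then $\iota_X\sigma=\iota_Y\sigma=0$, and the invariant formula for $d\sigma(X,Y,Z)$ then reduces to $\pm\,\sigma([X,Y],Z)=0$ for all $Z$, whence $[X,Y]\in\ker\sigma=D$. By Frobenius, after shrinking to a neighborhood $W$ of $p$ in $N$ the foliation of $W$ by $D$ is a fibration $\pi\co W\to B$ onto a manifold $B$ of dimension $2(n-k)$ whose fibers are the leaves. The form $\sigma|_W$ is basic for $\pi$: it annihilates the fiber directions because $D=\ker\sigma$, and it is invariant because $\mathcal{L}_X\sigma=d\iota_X\sigma+\iota_X d\sigma=0$ for $X\in D$. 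Hence $\sigma|_W=\pi^*\omega_B$ for a unique $2$-form $\omega_B$ on $B$, which is closed and (since $\ker\sigma$ is exactly the fiber directions) nondegenerate; that is, $(B,\omega_B)$ is symplectic.

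Then I would simply pick a Lagrangian through $\pi(p)$. By the ordinary Darboux theorem there are coordinates on $B$ near $\pi(p)$ in which $\omega_B$ is standard, and a coordinate half-dimensional subspace $\ell$ through $\pi(p)$ is Lagrangian in $(B,\omega_B)$. Set $L=\pi^{-1}(\ell)\subset W\subset N$. Then $p\in L\subset N$ and $\dim L=\dim\ell+k=(n-k)+k=n$. Finally $L$ is isotropic in $M$: for $q\in L$ and $u,v\in T_qL$ we have $\pi_*u,\pi_*v\in T_{\pi(q)}\ell$, so $\omega(u,v)=\sigma_q(u,v)=(\omega_B)_{\pi(q)}(\pi_*u,\pi_*v)=0$ because $\ell$ is isotropic. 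A middle-dimensional isotropic submanifold is Lagrangian, so $L$ is the required Lagrangian submanifold.

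The only substantive step is the local reduction in the second paragraph, namely the integrability of the characteristic distribution together with the descent of $\sigma$ to a symplectic form on the leaf space; I expect this to be the main technical point, although it is standard and follows formally from $d\omega=0$ via Cartan's formula. Alternatively, one could bypass the reduction altogether by invoking the coisotropic Darboux normal form, which supplies coordinates $(x_1,\dots,x_n,y_1,\dots,y_n)$ near $p$ with $\omega=\sum_j dx_j\wedge dy_j$ and $N=\{y_1=\cdots=y_k=0\}$; in these coordinates $L=\{y_1=\cdots=y_n=0\}$ is an explicit Lagrangian with $p\in L\subset N$. Either way, note that $L$ need not be closed as a subset, which is harmless here since Corollary \ref{lagmain} applies to arbitrary (not necessarily closed) Lagrangian submanifolds.
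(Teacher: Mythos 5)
Your argument is correct, but it proceeds differently from the paper. You carry out the local coisotropic reduction explicitly: restrict $\omega$ to $N$, check via Cartan's formula that the characteristic distribution $D=\ker(j^*\omega)$ is involutive, pass (after shrinking, by Frobenius) to the local leaf space $(B,\omega_B)$, choose a Lagrangian $\ell\subset B$ through $\pi(p)$, and take $L=\pi^{-1}(\ell)$; your dimension count and the verification that $L$ is isotropic via $\sigma=\pi^*\omega_B$ are both fine, and your alternative via the coisotropic Darboux normal form $N=\{y_1=\cdots=y_k=0\}$, $L=\{y=0\}$ is also legitimate. The paper instead chooses a submanifold $P\subset N$ through $p$ transverse to the characteristic foliation with $\dim P+\dim T_pN^{\omega}=\dim N$ (which is automatically symplectic), picks a Lagrangian $\Lambda\subset(P,\omega|_P)$ through $p$ in a Darboux chart, and then cites \cite[Theorem 5.3.30]{AM} to enlarge the isotropic $\Lambda$ to a Lagrangian $L$ with $p\in\Lambda\subset L\subset N$. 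The two routes are geometrically dual: your reduced space $B$ is modeled by the paper's transverse slice $P$, and the cited extension theorem in effect saturates $\Lambda$ along the characteristic leaves, which is exactly your $\pi^{-1}(\ell)$. What your version buys is self-containedness (only Frobenius and the descent of the form, no external extension theorem); what the paper's buys is brevity, by outsourcing the only substantive step to a packaged result in Abraham--Marsden. Both arguments are purely local and produce an $L$ that need not be closed as a subset, which, as you note, is all that Corollary \ref{lagmain} requires.
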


\begin{proof} Let $P$ be a (non-closed) submanifold of $N$ with $\dim  P+\dim T_pN^{\omega}=\dim N$ that passes through $p$ and is transverse to the characteristic foliation of $N$.  It is easy to see that $P$ is then a symplectic submanifold of $(M,\omega)$.  Let $\Lambda$ be a Lagrangian submanifold of $(P,\omega|_P)$ such that $p\in \Lambda$, as can be found for instance in a Darboux chart for $P$ around $p$.  Then $\Lambda$ is a $\frac{1}{2}(\dim N-\dim P)$-dimensional isotropic submanifold of $N$ such that every smooth function $H$ with $H|_P=0$ has Hamiltonian vector field $X_H$ which is nowhere tangent to $\Lambda$.  So \cite[Theorem 5.3.30]{AM} applies to give a Lagrangian submanifold $L$ with $p\in \Lambda\subset L\subset N$.
\end{proof} 

Now given a coisotropic submanifold $N$ of $(M,\omega)$, and given $p\in N$, let $L$ be as in Lemma \ref{lagexists}.  Corollary \ref{lagmain} gives a neighborhood $W$ of $p$ with $L\cap W$ closed in $W$ such that $e_{W}(U,L\cap W)>0$ whenever $U\subset W$ is a neighborhood of $p$.  This condition persists if $W$ is replaced by a smaller neighborhood of $p$, so we may as well assume that $W$ is small enough that $N\cap W$ is closed in $W$.  But since $L\subset N$ it trivially holds that $e_W(U,L\cap W)\leq e_W(U,N\cap W)$, so we also have $e_W(U,N\cap W)>0$ for all neighborhoods $U$ of $p$ contained in $W$.  So $p$ is locally rigid with respect to $N$, completing the proof of Theorem \ref{coisochar}.
\end{proof}

\begin{remark} \label{charfol} We have now completed the proof of Corollary \ref{hls}. With a little more effort one can also recover the statement from \cite{HLS} that a symplectic homemorphism $\psi\co M\to M$ that maps a coisotropic submanifold $N$ to a smooth (and hence, by Corollary \ref{hls}, coisotropic) submanifold $\psi(N)$ also maps the characteristic foliation of $N$ to the characteristic foliation of $\psi(N)$; let us briefly sketch this.  Since the statement is local, one can restrict attention to an open subset $U$ of $M$ such that $U\cap N$ is the domain of a coordinate chart in which the leaves of the characteristic foliation are given by $\{x\}\times V_x$ with $x$ varying through an open subset of $\R^{2(n-k)}$ and $V_x$ a neighborhood of the origin in $\R^{k}$.  If $\mathcal{F}$ is a foliation on $N\cap U$ consider the graph \[ \Gamma_{\mathcal{F}}=\left\{(p,q)\in (N\cap U)\times (N\cap U)\left|p,q\mbox{ lie on the same leaf of $\mathcal{F}$}\right.\right\}.\] A distinguishing property of the characteristic foliation $\mathcal{F}_{N\cap U}$  is that the associated graph $\Gamma_{\mathcal{F}_{N\cap U}}$ is a Lagrangian submanifold of $(U\times U,(-\omega)\times \omega)$.

Given our symplectic homeomorphism $\psi$, we can apply our results to the symplectic homeomorphism $\psi\times\psi$ of  $M\times M$.  Now $(\psi\times \psi)(\Gamma_{\mathcal{F}_{N\cap U}})$ is the graph $\Gamma_{\psi_*\mathcal{F}_{N\cap U}}$ of the image $\psi_{*}\mathcal{F}_{N\cap U}$ under $\psi$ of the characteristic foliation of $N\cap U$.  If $\Gamma_{\psi_*\mathcal{F}_{N\cap U}}$ is a smooth submanifold then Corollary \ref{hls} immediately implies that it is Lagrangian, which implies that $\psi_*\mathcal{F}_{N\cap U}$ is equal to the characteristic foliation of $\psi(N\cap U)$ and we are done.  Even if  $\Gamma_{\psi_*\mathcal{F}_{N\cap U}}\subset \psi(U)\times\psi(U)$ is not known to be smooth, Proposition \ref{invt} implies that all of its points are locally rigid with respect to it.  Moreover the projection of $\Gamma_{\psi_*\mathcal{F}_{N\cap U}}$ to either factor is the (smooth) coisotropic submanifold $\psi(N\cap U)$.  If $H\co \psi(U)\to\R$ is any compactly supported smooth function with $H|_{\psi(N\cap U)}\equiv 0$, define $\hat{H}\co \psi(U)\times\psi(U)\to\R$ by $\hat{H}(x,y)=H(y)$.  Then $\hat{H}$ vanishes identically on  $\Gamma_{\psi_*\mathcal{F}_{N\cap U}}$, and so Proposition \ref{locrigpres} shows that the Hamiltonian flow of $\hat{H}$ locally preserves $\Gamma_{\psi_*\mathcal{F}_{N\cap U}}$.  But this is equivalent to saying that the Hamiltonian flow of $H$ locally preserves the leaves of $\psi_*\mathcal{F}_{N\cap U}$.  Since a neighborhood of a point $x$ in a leaf of the characteristic foliation $\mathcal{F}_{\psi(N\cap U)}$ is swept out by integral curves passing through $x$ of Hamiltonian vector fields of smooth functions that vanish along $\psi(N\cap U)$, what we have shown implies that the leaves of $\mathcal{F}_{\psi(N\cap U)}$ are contained in the leaves of $\psi_*\mathcal{F}_{N\cap U}$.  Applying the same reasoning with $\psi$ replaced by $\psi^{-1}$ shows that this containment must be an equality.\end{remark}

\section{Rigidity and the rigid locus}

In \cite{U1}, the \textbf{rigid locus} of a closed subset $N$ of a symplectic manifold $(M,\omega)$ was defined to be \[ R_N=\bigcap_{\phi\in \bar{\Sigma}_N}\phi^{-1}(N) \] where $\Sigma_N\subset \mathrm{Ham}(M,\omega)$ is the subgroup consisting of those Hamiltonian diffeomorphisms $\phi$ with $\phi(N)=N$ and $\bar{\Sigma}_N$ is the closure of $\Sigma_N$ with respect to the topology on $\mathrm{Ham}(M,\omega)$ induced by the Hofer norm $\|\cdot\|$.  \cite{U1} demonstrated that $R_N$ was an effective tool for studying the ``Chekanov-Hofer'' pseudometric $\delta_N$ induced by the Hofer norm on the orbit of $N$ under $\mathrm{Ham}(M,\omega)$; in particular, except in the trivial case that $N$ contains a connected component of $M$, \cite[Lemma 4.2]{U1} shows that $\delta_N$ is nondegenerate if and only if $R_N=N$, and $\delta_N\equiv 0$ if and only if $R_N=\varnothing$.  

We have chosen the perhaps-overused word ``rigid'' in Definition \ref{rigdfn} because of the following new result which provides a less opaque equivalent definition of $R_N$:

\begin{prop}\label{rnchar}
Let $N$ be a closed subset of a symplectic manifold $(M,\omega)$.  Then the rigid locus $R_N$ is the set of all points $p\in N$ such that $p$ is rigid with respect to $N$.
\end{prop}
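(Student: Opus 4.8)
The plan is to prove the two inclusions separately, noting first that both $R_N$ and the set of points rigid with respect to $N$ are contained in $N$ (for $R_N$ this is because $\mathrm{id}\in\Sigma_N\subseteq\bar{\Sigma}_N$, so $R_N\subseteq\mathrm{id}^{-1}(N)=N$). For the inclusion $\{\text{rigid points}\}\subseteq R_N$ I would argue by contraposition: suppose $p\in N$ but $p\notin R_N$, so there is $\phi\in\bar{\Sigma}_N$ with $\phi(p)\notin N$. Since $N$ is closed and $\phi$ is a homeomorphism, there is a neighborhood $U$ of $p$ with $\phi(\bar U)\cap N=\varnothing$. Choosing $\sigma_k\in\Sigma_N$ with $\|\sigma_k^{-1}\phi\|\to 0$ (possible since $\phi$ lies in the Hofer-closure of $\Sigma_N$ and the norm is bi-invariant), the maps $\sigma_k^{-1}\phi$ satisfy $(\sigma_k^{-1}\phi)(\bar U)=\sigma_k^{-1}(\phi(\bar U))\subseteq\sigma_k^{-1}(M\setminus N)=M\setminus N$, because $\sigma_k\in\Sigma_N$ preserves $N$ and hence $M\setminus N$. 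Thus $e_M(U,N)\le\|\sigma_k^{-1}\phi\|\to 0$, so $p$ is not rigid. This half needs nothing beyond the definitions.

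The reverse inclusion $R_N\subseteq\{\text{rigid points}\}$ is the substantial one, and I would again argue by contraposition: assuming $p$ is not rigid, so that $e_M(U,N)=0$ for some neighborhood $U$ of $p$, I must produce a single $\phi\in\bar{\Sigma}_N$ with $\phi(p)\notin N$. The main reusable tool is the cutoff construction from the proof of Proposition \ref{locrigpres}: for any compactly supported $H$ with $H|_N\equiv 0$, the functions $\beta_m\circ H$ vanish on a neighborhood of $N$, so their time-$t$ maps lie in $\Sigma_N$ (they are the identity near $N$), while $\phi_H^t$ differs from $\phi^t_{\beta_m\circ H}$ by the time-$t$ map of $H-\beta_m\circ H$, a function taking values in $[-\frac{2}{m},\frac{2}{m}]$ and hence of Hofer norm at most $\frac{4}{m}$. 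Consequently $\phi_H^t\in\bar{\Sigma}_N$ for every such $H$ and every $t$. Since $R_N$ is preserved by all of $\bar{\Sigma}_N$, this already shows that $R_N$ is contained in the set of points that no flow of a Hamiltonian vanishing on $N$ can push off $N$, so it would remain to show that non-rigidity forces the existence of such an escaping flow, or to build the required $\phi$ directly.

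The crux is to convert the cheap displaceability of $\bar U$ into an actual element of $\bar{\Sigma}_N$ moving $p$, and this is where the main obstacle lies: a Hofer-small diffeomorphism $\rho$ displacing $\bar U$ off $N$ is generally \emph{not} generated by a Hamiltonian vanishing on $N$, and the cutoff that would place $\rho$ in $\bar{\Sigma}_N$ (forcing it to equal the identity near $N\ni p$) simultaneously destroys the displacement of $p$. I would attack this by building a \emph{fixed} Hamiltonian diffeomorphism $\phi$ whose displaced set $\phi^{-1}(N)$ lies at Chekanov--Hofer distance zero from $N$ while still satisfying $p\notin\phi^{-1}(N)$: starting from displacers $\rho_k$ with $\|\rho_k\|\to 0$ and $\rho_k(\bar U)\cap N=\varnothing$, one telescopes Hofer-summable corrections so that a fixed neighborhood of $p$ is permanently kept off $N$ while the successive sets $\rho_k^{-1}(N)$ are driven together. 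Equivalently, the goal can be recast as showing that non-rigidity forces the existence of a compactly supported $H$ with $H|_N\equiv 0$ and $\phi_H^t(p)\notin N$ for some $t$, after which the cutoff tool above finishes the proof. Verifying that the limiting construction produces a genuine element of $\mathrm{Ham}(M,\omega)$ (the Hofer completion is strictly larger than $\mathrm{Ham}$) — equivalently, manufacturing the escaping vanishing-on-$N$ Hamiltonian from the cheap displacement for an arbitrary closed set $N$, where $N$ carries no tangent-space structure to exploit — is the step I expect to be hardest.
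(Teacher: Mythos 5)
Your first inclusion (a point outside $R_N$ is not rigid) is correct and is essentially identical to the paper's argument, so there is nothing to add there. The problem is the second inclusion, and you have correctly diagnosed where it lies: you never produce, from the hypothesis $e_M(U,N)=0$, an element of $\bar{\Sigma}_N$ moving $p$ off $N$ (equivalently, you never rule out $p\in R_N$ for a non-rigid $p$). The two routes you sketch do not close the gap. The ``telescoping'' of Hofer-small displacers $\rho_k$ is not carried out, and the obstruction you name yourself is real: a Hofer-Cauchy sequence need not converge in $\mathrm{Ham}(M,\omega)$, and there is no mechanism in your sketch forcing the would-be limit to preserve $N$ while moving $p$. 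The alternative reduction --- ``non-rigidity forces a compactly supported $H$ with $H|_N\equiv 0$ and $\phi_H^t(p)\notin N$'' --- is a converse to Proposition \ref{locrigpres} that is neither proved nor plausible for an arbitrary closed set $N$ (with no tangential structure, there is no reason a cheap displacement of a neighborhood of $p$ is realized by the flow of a Hamiltonian vanishing on all of $N$). So as it stands the substantial half of the proposition is missing.

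The paper avoids this construction problem entirely by proving the implication $p\in R_N\Rightarrow p$ rigid directly, with a conjugation trick. If some neighborhood of $p$ is contained in $N$, rigidity follows from the energy--capacity inequality. Otherwise, given a neighborhood $U$ of $p$, one chooses a Hamiltonian diffeomorphism $\psi$ supported in $U$ with $\psi(p)\notin N$ (pushing $p$ along a path in $U$ to a point of $M\setminus N$). Since $p\in R_N$, this $\psi$ cannot lie in $\bar{\Sigma}_N$, which yields a $\delta>0$ such that every $\zeta\in\mathrm{Ham}(M,\omega)$ with $\zeta(N)=\psi(N)$ has $\|\zeta\|\geq\delta$. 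Now if $\phi(\bar{U})\cap N=\varnothing$, then $\phi\psi^{-1}\phi^{-1}$ is supported in $\phi(U)$, hence acts as the identity on $N$, so $\psi\phi\psi^{-1}\phi^{-1}(N)=\psi(N)$ and therefore $\delta\leq\|\psi\phi\psi^{-1}\phi^{-1}\|\leq 2\|\phi\|$, giving $e_M(U,N)\geq\delta/2>0$. Note that here one only uses the \emph{non}-membership $\psi\notin\bar{\Sigma}_N$, which is exactly what $p\in R_N$ hands you for free; no element of $\bar{\Sigma}_N$ ever needs to be constructed. If you want to salvage your contrapositive framing, you would still need an argument of this commutator type (or some substitute for it), since that is what converts displaceability-with-small-energy into a statement about the Hofer distance from $\Sigma_N$.
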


\begin{proof}
First let us show that if $p\notin R_N$ then $p$ is not rigid with respect to $N$.  That $p\notin R_N$ means that there is $\phi\in\bar{\Sigma}_N$ with $\phi(p)\notin N$.  As $N$ is closed there is then a neighborhood $U$ of $p$ with $\phi(\bar{U})\cap N=\varnothing$.  Since $\phi\in\bar{\Sigma}_N$, for every $\ep>0$ there is $\psi\in \Sigma_N$ such that $\|\psi^{-1}\phi\|<\ep$; the fact that $\psi^{-1}(N)=N$ then yields\[ (\psi^{-1}\phi)(\bar{U})\cap N=\psi^{-1}\left(\phi(\bar{U})\cap N\right)=\varnothing \] and so $e_M(U,N)\leq \|\psi^{-1}\phi\|<\ep$.  Since $\ep$ is arbitrary this proves that $p$ is not rigid with respect to $N$.

For the other inclusion, suppose that $p\in R_N$.  If there is an open subset of $M$ that contains $p$ and is entirely contained in $N$ then an energy-capacity inequality such as \cite[Theorem 1.1]{LM} readily implies that $p$ is rigid with respect to $N$, so assume that every neighborhood of $p$ intersects $M\setminus N$.  

Fix a neighborhood $U$ of $p$. Then in view of the assumption above there is a smooth path $\gamma\co [0,1]\to U$ such that $\gamma(0)=p$ and $\gamma(1)\in M\setminus N$.  Let $\psi\in \mathrm{Ham}(U,\omega|_U)$ be a Hamiltonian diffeomorphism supported inside $U$ such that $\psi(p)=\gamma(1)$ (for instance $\psi$ could be the time-one map of a time-dependent Hamiltonian vector field $X_t$ such that $X_t(\gamma(t))=\gamma'(t)$).  In particular $\psi(p)\notin N$, so since $p\in R_N$ it follows that $\psi\notin \bar{\Sigma}_N$.  So there is $\delta>0$ such that every $\zeta\in \mathrm{Ham}(M,\omega)$ such that $\zeta(N)=\psi(N)$ obeys $\|\zeta\|\geq \delta$ (for otherwise a sequence $\{\zeta_m\}$ with $\zeta_m(N)=\psi(N)$ and $\|\zeta_{m}\|<\frac{1}{m}$  would lead to a sequence $\{\zeta_{m}^{-1}\psi\}$ in $\Sigma_N$ that Hofer-converges to $\psi$).  We claim that $e_M(U,N)\geq \frac{\delta}{2}$.

Indeed, suppose that $\phi\in \mathrm{Ham}(M,\omega)$ has $\phi(\bar{U})\cap N=\varnothing$. Since $\psi$ has support contained in $U$,  $\phi\psi^{-1}\phi^{-1}$ has support contained in $\phi(U)$, and thus acts as the identity on $N$.  Hence $\psi\phi\psi^{-1}\phi^{-1}(N)=\psi(N)$.  So if $\delta$ is as in the previous paragraph, we obtain from standard properties of the Hofer norm that \[ \delta\leq \|\psi\phi\psi^{-1}\phi^{-1}\|\leq \|\psi\phi\psi^{-1}\|+\|\phi^{-1}\|=2\|\phi\|.\] So indeed $e_M(U,N)\geq\frac{\delta}{2}$.  Since $\delta$ depends only on the arbitrary neighborhood $U$ of $p$ this proves that $p$ is rigid with respect to $N$.   
\end{proof}

\begin{cor}
If $\psi\co M\to M$ is a symplectic homeomorphism and if $N\subset M$ is any closed subset then $R_{\psi(N)}=\psi(R_N)$.
\end{cor}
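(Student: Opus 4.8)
The plan is to combine Proposition \ref{rnchar}, which identifies the rigid locus of a closed set with its set of rigid points, with the ``closed''/``rigid'' form of Proposition \ref{invt}, which asserts the invariance of rigidity under symplectic homeomorphisms. First I would observe that since $\psi$ is in particular a homeomorphism of $M$ and $N$ is closed, the image $\psi(N)$ is again a closed subset of $M$; this is what allows Proposition \ref{rnchar} to be applied to $\psi(N)$ and, in particular, guarantees that $R_{\psi(N)}$ is defined in the first place.

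With that in hand, Proposition \ref{rnchar} tells us that $R_N$ is exactly the set of $p\in N$ that are rigid with respect to $N$, and likewise $R_{\psi(N)}$ is exactly the set of $q\in\psi(N)$ that are rigid with respect to $\psi(N)$. I would then fix an arbitrary point of $\psi(N)$ and write it as $\psi(p)$ for a (unique) $p\in N$. The final sentence of Proposition \ref{invt} says precisely that $p$ is rigid with respect to $N$ if and only if $\psi(p)$ is rigid with respect to $\psi(N)$. Chaining these equivalences gives $\psi(p)\in R_{\psi(N)}$ if and only if $p\in R_N$, i.e. if and only if $\psi(p)\in\psi(R_N)$. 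Since $R_N\subset N$ and $R_{\psi(N)}\subset\psi(N)$, both sets in question are subsets of $\psi(N)$, and every such point arises as $\psi(p)$ for a unique $p\in N$; the elementwise equivalence therefore upgrades, using the bijectivity of $\psi$, to the equality $R_{\psi(N)}=\psi(R_N)$.

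There is no genuine technical obstacle here: all the analytic content has already been absorbed into Propositions \ref{invt} and \ref{rnchar}, and the corollary is a formal consequence of them. The only points requiring any care are bookkeeping ones—confirming that the ambient set with respect to which rigidity is measured is $N$ on one side and $\psi(N)$ on the other, and noting explicitly that $\psi(N)$ is closed so that both applications of Proposition \ref{rnchar} are legitimate.
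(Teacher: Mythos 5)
Your proof is correct and is essentially the paper's own argument: the paper simply states that the corollary is immediate from Propositions \ref{invt} and \ref{rnchar}, and your write-up spells out exactly that deduction (including the bookkeeping points that $\psi(N)$ is closed and that the bijectivity of $\psi$ converts the pointwise equivalence into the set equality $R_{\psi(N)}=\psi(R_N)$).
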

\begin{proof}
This is immediate from Propositions \ref{invt} and \ref{rnchar}.
\end{proof}

In particular it follows from  \cite[Lemma 4.2]{U1} (along with easy arguments in case $N$ contains a connected component of $M$) that the property of the Chekanov-Hofer pseudometric $\delta_N$ being nondegenerate is invariant under symplectic homeomorphisms, and likewise for the property of $\delta_N$ being identically zero.  \cite{U1} and \cite{U2} discuss a variety of cases where one or the other of these properties holds.

\cite[Remark 2.16]{U2} raised the question of whether, as seemed likely based on examples but difficult to prove from the original definition, the rigid locus is monotone with respect to inclusions. Proposition \ref{rnchar} renders this question trivial:

\begin{cor} \label{monotone}
If $(M,\omega)$ is a symplectic manifold and $A,B\subset M$ are closed subsets with $A\subset B$ then $R_A\subset R_B$.
\end{cor}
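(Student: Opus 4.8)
The plan is to deduce this directly from Proposition \ref{rnchar}, which replaces the original definition of $R_N$ as an intersection over the Hofer-closure $\bar{\Sigma}_N$ by the intrinsic condition that $R_N$ consists of exactly those points of $N$ that are rigid with respect to $N$. Given this characterization, it suffices to show two things for a point $p\in R_A$: that $p\in B$, and that $p$ is rigid with respect to $B$. The first is immediate, since Proposition \ref{rnchar} gives $p\in A$ and we have assumed $A\subset B$, so $p\in B$.

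For the rigidity, the key step is to record the monotonicity of the displacement energy $e_M(U,\cdot)$ in its second argument. First I would fix an arbitrary neighborhood $U$ of $p$ and observe that, because $A\subset B$, any $\phi\in\mathrm{Ham}(M,\omega)$ with $\phi(\bar U)\cap B=\varnothing$ automatically satisfies $\phi(\bar U)\cap A=\varnothing$. In other words, the set of Hamiltonian diffeomorphisms displacing $\bar U$ from $B$ is contained in the set of those displacing $\bar U$ from $A$. Taking the infimum of $\|\phi\|$ over the smaller feasible set can only increase its value, so $e_M(U,A)\leq e_M(U,B)$.

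Combining this inequality with the hypothesis then finishes the argument. Since $p$ is rigid with respect to $A$, we have $e_M(U,A)>0$ for every neighborhood $U$ of $p$; hence $e_M(U,B)\geq e_M(U,A)>0$ for every such $U$, so $p$ is rigid with respect to $B$. Together with $p\in B$ this yields $p\in R_B$ by Proposition \ref{rnchar}, and as $p\in R_A$ was arbitrary we conclude $R_A\subset R_B$.

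I do not expect a genuine obstacle here; this is precisely the situation in which the reformulation of Proposition \ref{rnchar} pays off, since monotonicity of $R_N$ under inclusions is opaque from the original definition (the closure $\bar{\Sigma}_N$ and the stabilizer $\Sigma_N$ behave in no obviously monotone way as $N$ grows) yet transparent for the rigidity condition. The only point demanding minor care is the direction of the inequality for $e_M$: one must note that displacing a fixed set from the \emph{larger} target $B$ is the \emph{harder} task, so the feasible set of displacing maps shrinks and the infimum grows, giving $e_M(U,A)\leq e_M(U,B)$ rather than the reverse.
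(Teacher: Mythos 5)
Your proposal is correct and follows essentially the same route as the paper: invoke Proposition \ref{rnchar}, note the monotonicity $e_M(U,A)\leq e_M(U,B)$ coming from the fact that any Hamiltonian diffeomorphism displacing $\bar U$ from $B$ also displaces it from $A$, and conclude rigidity with respect to $B$ from rigidity with respect to $A$. The paper's proof is exactly this, stated more tersely.
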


\begin{proof}
If $U$ is any open set intersecting $A$ one obviously has $e_M(U,A)\leq e_M(U,B)$.  So if $p\in R_A$, \emph{i.e.} (by Proposition \ref{rnchar}) if $p\in A$ with $e_M(U,A)> 0$ for every neighborhood $U$ of $p$, then likewise $e_M(U,B)>0$ for every neighborhood $U$ of $p$, so $p\in  R_B$ by Proposition \ref{rnchar}.
\end{proof}

\begin{remark} \label{symplectization}
If $N\subset M$ is closed, obviously any point $p\in N$ that is rigid with respect to $N$ is also locally rigid with respect to $N$ since one can just take $W=M$ in the definition of local rigidity.  The converse can be quite far from being true.  As a simple example, if $N=\{(x,0)|0\leq x\leq 1\}\subset \R^2$ then it is not hard to see that $R_N=\varnothing$, but 
each point $(x,0)$ with 
$0<x<1$ is locally rigid with respect to $N$.  

It is also possible for \emph{all} points of a closed subset $N\subset M$ to be locally rigid but none to be rigid.  Namely, suppose that $M$ is the symplectization of a contact manifold $Y$ and that $N$ is a compact exact Lagrangian submanifold of $M$;  examples of such $Y$ and $N$ are not easy to construct, but   \cite{Mul} gives an example with $Y$ an exotic contact $\R^5$, and \cite{Mur} gives examples with $Y$ an arbitrary overtwisted contact manifold of dimension at least $5$.  Because $N$ is Lagrangian, all of its points are locally rigid by Corollary \ref{lagmain}.  But the argument in \cite[Section 4]{Che} shows that the Chekanov-Hofer pseudometric $\delta_N$ vanishes identically
and so $R_N=\varnothing$ by \cite[Lemma 4.2]{U1}. Thus Proposition \ref{rnchar} shows that no points of $N$ are rigid.
\end{remark}

\subsection*{Acknowledgements} I thank Sobhan Seyfaddini for his comments on the initial version of the paper. This work was supported by the NSF through the grant DMS-1509213.


\end{document}